\theoremstyle{plain}
\newtheorem{theorem}{Theorem}
\newtheorem{lemma}[theorem]{Lemma}
\newtheorem{prop}[theorem]{Proposition}
\theoremstyle{definition}
\newtheorem{example}[theorem]{Example}
\numberwithin{equation}{section}
\newcommand{\N}{\mathbb{N}}
\newcommand{\Z}{\mathbb{Z}}
\newcommand{\Q}{\mathbb{Q}}
\newcommand{\A}{\alpha}
\newcommand{\B}{\beta}
\newcommand{\G}{\gamma}
\newcommand{\Tr}{\mathrm{Tr}}
\DeclareRobustCommand\widecheck[1]{{\mathpalette\@widecheck{#1}}}
\def\@widecheck#1#2{%
    \setbox\z@\hbox{\m@th$#1#2$}%
    \setbox\tw@\hbox{\m@th$#1%
       \widehat{%
          \vrule\@width\z@\@height\ht\z@
          \vrule\@height\z@\@width\wd\z@}$}%
    \dp\tw@-\ht\z@
    \@tempdima\ht\z@ \advance\@tempdima2\ht\tw@ \divide\@tempdima\thr@@
    \setbox\tw@\hbox{%
       \raise\@tempdima\hbox{\scalebox{1}[-1]{\lower\@tempdima\box
\tw@}}}%
    {\ooalign{\box\tw@ \cr \box\z@}}}
\begin{document}

\title
{Universal quadratic forms over multiquadratic fields}

\author{V\'\i t\v ezslav Kala}
\address{Charles University, Faculty of Mathematics and Physics, Department of Algebra, Sokolov\-sk\' a 83, 18600 Praha~8, Czech Republic}
\address{University of Goettingen, Mathematisches Institut, Bunsenstr.~3-5, D-37073 G\"ottingen, Germany}
\email{vita.kala@gmail.com}

\author{Josef Svoboda}
\address{Charles University, Faculty of Mathematics and Physics, Department of Algebra, Sokolov\-sk\' a 83, 18600 Praha~8, Czech Republic}
\email{josefsvobod@gmail.com}

\thanks{V. K. was supported by Czech Science Foundation GA\v CR, grant 17-04703Y. J. S. was supported by the project SVV-2017-260456}

\subjclass[2010]{11E12, 11R20}

\date{\today}

\keywords{universal quadratic form, multiquadratic number field, additively indecomposable integer}

\begin{abstract}
For all positive integers $k$ and $N$ we prove that there are infinitely many totally real multiquadratic fields $K$ of degree $2^k$ over $\Q$ such that each
universal quadratic form over $K$ has at least $N$ variables.
\end{abstract}

\maketitle

\section{Introduction}

The arithmetics of quadratic forms has a colorful history, and we will be interested in one of its important aspects, \emph{universality}:
a totally positive definite quadratic form with coefficients in the ring of integers $\mathcal O_K$ of a totally real number field $K$
is universal if it represents all totally positive integers. 
To slightly simplify our discussion, let us denote by $m(K)$ the minimal arity of such a form.
Starting with the classical fact that $m(\mathbb Q)=4$ (e.g., the sum of 4 squares is universal), the theory of universal forms over $\mathbb Z$ was essentially completed by the 15- and 290- theorems \cite{Bh}, \cite{BH}. 

This was followed by a number of results concerning forms over real quadratic fields: besides from some interesting theorems over specific fields of small discriminant obtained by various authors \cite{CKR}, \cite{De}, \cite{Sa},
Kim \cite{Ki}, \cite{Ki2} showed that $\mathbb Q(\sqrt {n^2-1})$ always has an 8-ary universal form and also proved that there are only finitely many values of $D$ such that $\Q(\sqrt D)$ has a {diagonal} 7-ary universal quadratic form.
Blomer and Kala \cite{BK}, \cite{Ka} recently used continued fractions to construct, for each $N$, infinite families of real quadratic fields such that $m(\mathbb Q(\sqrt D))\geq N$ in these families.

In general, already in 1945 Siegel \cite{Si} proved that $\mathbb Q$ and $\mathbb Q(\sqrt 5)$ are the only number fields (of any degree) over which the sum of any number of squares is universal (over $\mathcal O_K$). This changes dramatically when one enlarges $\mathcal O_K$: recently Collinet \cite{Co} showed that the sum of five squares is universal over $\mathcal O_K\left[\frac 12\right ]$ for any number field $K$.

Otherwise, not much is known beyond the quadratic case. One of the reasons why the higher degree case is harder is that the theory of continued fractions
and their convergents is not available, and hence we do not understand the arithmetics of $\mathcal O_K$ well enough.

Nevertheless, the goal of this short note is to generalize the aforementioned result of Blomer and Kala to the case of multiquadratic fields, i.e., number fields of the form $K=\mathbb Q(\sqrt {p_1}, \sqrt {p_2}, \dots, \sqrt{p_k})$ with $k\geq 1$ and $p_1, p_2, \dots, p_k\in\mathbb N$.

\begin{theorem}\label{main thm}
For all pairs of positive integers $k$, $N$ there are infinitely many totally real multiquadratic fields $K$ of degree $2^k$ over $\Q$ such that 
$m(K)\geq N$.
\end{theorem}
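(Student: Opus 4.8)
The plan is to deduce the multiquadratic bound from the already--understood real quadratic case by a relative--trace descent, so that the whole burden is shifted onto a quadratic subfield whose arithmetic is controlled by continued fractions. Fix a quadratic subfield $F=\Q(\sqrt D)\subseteq K$, so that $[K:F]=2^{k-1}$. The first step is the descent itself: given any universal quadratic form $Q$ over $\mathcal{O}_K$ of rank $r$, fix an $\mathcal{O}_F$--basis $b_1,\dots,b_{2^{k-1}}$ of $\mathcal{O}_K$ and define a form $\widetilde Q$ over $\mathcal{O}_F$ in the $r\cdot 2^{k-1}$ variables $w_{i,s}$ (the $\mathcal{O}_F$--coordinates of $w\in\mathcal{O}_K^r$) by $\widetilde Q(w)=\Tr_{K/F}(Q(w))$. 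Expanding, its coefficients are the traces $\Tr_{K/F}(g_{ij}b_sb_t)$ of algebraic integers, so $\widetilde Q$ is an integral quadratic form over $\mathcal{O}_F$; it is totally positive definite because $\Tr_{K/F}$ sends totally positive elements of $K$ to totally positive elements of $F$ (each real place of $F$ splits into real places of $K$ on which $Q(w)$ is positive).

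The second step identifies what $\widetilde Q$ represents. For every totally positive $\beta\in\mathcal{O}_F$, universality of $Q$ over $K$ gives $v$ with $Q(v)=\beta$, whence $\widetilde Q(v)=\Tr_{K/F}(\beta)=2^{k-1}\beta$. Consequently the rescaled form $\tfrac{1}{2^{k-1}}\widetilde Q$, whose coefficients lie in $\tfrac{1}{2^{k-1}}\mathcal{O}_F$, represents \emph{every} totally positive element of $\mathcal{O}_F$; that is, it is a totally positive definite universal form over $F$ with denominators bounded by $2^{k-1}$, in $r\cdot 2^{k-1}$ variables. Hence, granting a lower bound of the form \emph{any totally positive definite universal form over $\mathcal{O}_F$ with coefficients in $\tfrac{1}{2^{k-1}}\mathcal{O}_F$ has at least $N'$ variables}, we obtain $r\cdot 2^{k-1}\ge N'$; choosing $N'=N\,2^{k-1}$ gives $r\ge N$, i.e. $m(K)\ge N$.

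For the construction I would invoke the Blomer--Kala families: for each target $N'$ there are infinitely many squarefree $D$ for which $\Q(\sqrt D)$ carries enough additively indecomposable integers to force the required lower bound. Fixing such an $F=\Q(\sqrt D)$, adjoin real square roots of primes, $K=F(\sqrt{q_2},\dots,\sqrt{q_k})$, with $D,q_2,\dots,q_k$ multiplicatively independent modulo squares; then $K$ is totally real of degree exactly $2^k$. Varying the $q_i$, and then $D$, produces infinitely many such $K$ for each pair $(k,N)$.

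The main obstacle is the emphasized hypothesis of the second paragraph: one must upgrade the Blomer--Kala bound from genuine $\mathcal{O}_F$--integral universal forms to universal forms whose coefficients carry the fixed bounded denominator $2^{k-1}$. This is exactly where the indecomposable--integer analysis has to be revisited. The indecomposables $\alpha\in\mathcal{O}_F$ driving the quadratic argument remain genuine indecomposables of $\mathcal{O}_F$, and the geometry underlying the lower bound---the cone of totally positive elements and its extreme rays, which the indecomposables single out---is insensitive to clearing a bounded denominator. What must be checked is that the discreteness/lattice input of the quadratic argument, and the rigidity of representations of indecomposables, survive passage to coefficients in $\tfrac{1}{2^{k-1}}\mathcal{O}_F$; once this scaling--robust form of the quadratic bound is in place, the descent and the field construction above are routine.
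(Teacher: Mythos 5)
Your descent idea --- transferring a rank-$r$ universal form $Q$ over $\mathcal O_K$ down to a quadratic subfield $F=\Q(\sqrt D)$ via $\widetilde Q=\Tr_{K/F}\circ Q$ in $r\cdot 2^{k-1}$ variables --- is genuinely different from the paper's argument, but as written it has a gap exactly at the point you flag as ``the main obstacle,'' and that obstacle is not routine. The form $\frac1{2^{k-1}}\widetilde Q$ is universal over $F$ only as a form with coefficients in $\frac1{2^{k-1}}\mathcal O_F$, and the Blomer--Kala lower bound does not apply to such forms as a black box. Concretely, their mechanism (Proposition 2.1 of [Ka1]) produces elements $a_1,\dots,a_{N'}\in\mathcal O_F^+$ such that $4a_ia_j\succeq c^2$ forces $c=0$ for $c\in\mathcal O_F$ (or $c\in\frac12\mathcal O_F$, to cover non-classical forms); to run the same orthogonality/Cauchy--Schwarz argument for your rescaled form you need the implication for all $c\in\frac1{2^{k}}\mathcal O_F$, i.e.\ for $d\in\mathcal O_F$ with $d^2\preceq 4^{k+1}a_ia_j$. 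That is a strictly stronger requirement on the witnesses, and verifying it means re-entering the continued-fraction construction and re-tuning the trace and norm inequalities there (e.g.\ demanding $4^{k+1}\Tr(a_ia_j)<\sqrt D$ rather than $4\Tr(a_ia_j)<\sqrt D$). It is plausible that the construction is flexible enough to absorb the extra factor $4^{k-1}$, but this is the entire content of the theorem in your approach and cannot be deferred as ``scaling-robust.'' A secondary issue: $\mathcal O_K$ need not be free as an $\mathcal O_F$-module, so the basis $b_1,\dots,b_{2^{k-1}}$ may not exist; working with a free overmodule introduces further denominators that compound the first problem.

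For contrast, the paper avoids the rescaling issue entirely by never leaving the ambient field: it keeps the \emph{same} witness elements $a_1,\dots,a_N$ produced in the quadratic (or, inductively, degree-$2^k$) field and shows they still satisfy the hypothesis of Proposition \ref{promen} in $L=K(\sqrt q)$, provided $q$ is chosen coprime to the $p_i$ and so large that $\sqrt q>2^{k+1}$ and $\sqrt q>8\Tr_K(a_ia_j)$. The key tool is Lemma \ref{trace}: any $c=u+v\sqrt q\in\mathcal O_L$ with $v\neq 0$ and $c^2\preceq 4a_ia_j$ would have $\Tr_L(c^2)>\sqrt q$, contradicting $\Tr_L(c^2)\le 4\Tr_L(a_ia_j)<\sqrt q$; the case $v=0$ reduces to the inductive hypothesis. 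This buys a clean induction on $k$ with no need to strengthen the quadratic input, at the cost of being specific to towers built from quadratic extensions. Your transfer argument, if the strengthened quadratic bound were established, would have the advantage of applying to any totally real $K$ of even degree containing a suitable quadratic subfield --- but in its current state the central lemma is missing.
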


As far as we know, this is the first general result over number fields of degree greater than two. Our general approach is the same as in \cite{Ka}, where
one produces sufficiently many additively indecomposable integers (with suitable additional properties, see Proposition \ref{promen} below). 
We proceed by induction on $k$, starting with a quadratic subfield  $\mathbb Q(\sqrt {p_1})$ of $K$ in which we can find the desired elements $a_i$. Then  we inductively choose sufficiently large integers $p_2, \dots, p_k$ and show that our elements $a_i$ retain their necessary properties. For this we need some
basic results on the arithmetics of totally positive integers in $K$, which we collect in Section \ref{sec 2}. 
We first review them in the biquadratic case which is notationally simpler and only then give the general statements.

An interesting question is whether it is possible
to generalize our arguments to other number fields $K$.
One could perhaps hope to push them as far as general totally real fields of even degree over $\mathbb Q$, but this seems not to be easy.  
Anyway, as we rely on the existence of a quadratic subfield of $K$, the situation of odd degree fields will certainly require significant new ideas.

\section{Totally positive integers}\label{sec 2}

For the study of universal quadratic forms, we first need to recall some basics about number fields, mostly concerning totally positive integers and indecomposables.

Let $K$ be a totally real number field of degree $d$ over $\mathbb Q$, i.e., $K$ has exactly $d$ real embeddings $\sigma: K\hookrightarrow\mathbb R$. 
An element $\alpha\in K$ is \emph{totally positive} if $\sigma(\alpha)>0$ for all $\sigma$. We'll write $\alpha\succ\beta$ if $\alpha-\beta$ is totally positive, and $\alpha\succeq\beta$ if $\alpha=\beta$ or $\alpha\succ\beta$; the semiring of all totally positive algebraic integers is denoted by $\mathcal{O}_K^+$.
Finally, a totally positive integer $\alpha\in\mathcal O_K^+$ is \emph{(additively) indecomposable} if it can't be decomposed as the sum of two totally positive integers.

Let's also make precise our definitions concerning quadratic forms over $K$. 
Such an $N$-ary form is defined as $Q(x_1, \dots, x_N)=\sum_{1\leq i\leq j\leq N} a_{ij}x_ix_j$ with coefficients $a_{ij}\in\mathcal O_K$.
We say that $Q$ is \emph{totally positive definite} if $\sigma(Q)(x_1, \dots, x_N):=\sum_{1\leq i\leq j\leq N}\sigma(a_{ij})x_ix_j$ is a positive definite quadratic form for each $\sigma: K\hookrightarrow\mathbb R$. And a form $Q$ is \emph{universal} if it represents all totally positive integers, i.e., 
for each $\alpha\in\mathcal O_K^+$ there are $x_1, \dots, x_N\in\mathcal O_K$ with $Q(x_1, \dots, x_N)=\alpha$.

\subsection{Biquadratic fields}\label{subsec biquad}

In the case of biquadratic fields, the notation and results are clearer, so we'll start with it as an example. 
Hence in this subsection, let $K=\mathbb Q(\sqrt p, \sqrt q)$ be a totally real biquadratic number field, i.e., $p$ and $q$ are squarefree positive integers such that $K$ has degree 4 over $\mathbb Q$. Define $$r:=\frac{pq}{\gcd(p,q)^2};$$ for an element $\alpha=a+ b\sqrt p+c\sqrt q+d\sqrt r\in K$ with $a, b, c, d\in\Q$ we order its conjugates as

$\alpha^{(1)}=a+ b\sqrt p+c\sqrt q+d\sqrt r,$

$\alpha^{(2)}=a+ b\sqrt p-c\sqrt q-d\sqrt r,$ 

$\alpha^{(3)}=a- b\sqrt p+c\sqrt q-d\sqrt r,$

$\alpha^{(4)}=a- b\sqrt p-c\sqrt q+d\sqrt r.$

The ring of integers $\mathcal O_K$ can be explicitly described in terms of the values of $p,q,r$ modulo 4. The precise description is a little technical; we will only need to know that if $\alpha\in\mathcal O_K$, then its coefficients always lie in $\frac{1}{4}\mathbb Z$.

\begin{example}\label{baze}
In the case $p\equiv 3\pmod 4, q\equiv r\equiv 2\pmod 4$, an integral basis for $\mathcal O_K$ can be chosen of the form 
$1, \sqrt p, \sqrt q, \frac{\sqrt q+\sqrt r}2$; when $p\equiv q\equiv r\equiv 1\pmod 4$, one can take 
$1, \frac{1+\sqrt p}2, \frac{1+\sqrt q}2, \frac{(1+\sqrt p)(1+\sqrt q)}4$ as a basis -- see, e.g., \cite[Proposition 8.22]{Jarvis}.
\end{example}

We will later need to know that the trace of a totally positive integer can't be too small:

\begin{lemma}\label{stopa} 
Suppose that $\A = a+b\sqrt{p}+c\sqrt{q}+d\sqrt{r} \in \mathcal{O}_K^+$ (with $a, b, c, d\in\Q$). Let $\Tr$ denote the trace from $K$ to $\Q$.
\begin{enumerate}
\item If $b \neq 0$, then
$\Tr(\A) > \sqrt{p}$.
\item If $c \neq 0$, then
$\Tr(\A) > \sqrt{q}$.
\item If $d \neq 0$, then
$\Tr(\A) > \sqrt{r}$.
\end{enumerate}
Finally, if $\A \notin \mathbb{Z}$, then $\Tr(\A)> \min(\sqrt{p},\sqrt{q},\sqrt{r}).$
\end{lemma}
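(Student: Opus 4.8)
The plan is to exploit the explicit ordering of the four conjugates together with total positivity, and to reduce everything to a lower bound on the single coefficient $a$. First I would note that summing all four conjugates gives $\Tr(\A) = \A^{(1)}+\A^{(2)}+\A^{(3)}+\A^{(4)} = 4a$, so each assertion is really a claim about how large $a$ must be. Since $\A \in \mathcal{O}_K^+$, each conjugate $\A^{(i)}$ is strictly positive, and this is the only analytic input I will use.

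Next I would isolate the off-diagonal coefficients by pairing the conjugates so that two of $b, c, d$ cancel. For $b$: adding $\A^{(1)}+\A^{(2)} = 2a + 2b\sqrt{p} > 0$ and $\A^{(3)}+\A^{(4)} = 2a - 2b\sqrt{p} > 0$ forces $a > |b|\sqrt{p}$. The same device with the pairings $\{\A^{(1)},\A^{(3)}\}$ and $\{\A^{(2)},\A^{(4)}\}$ gives $2a \pm 2c\sqrt{q} > 0$, hence $a > |c|\sqrt{q}$; and the pairings $\{\A^{(1)},\A^{(4)}\}$ and $\{\A^{(2)},\A^{(3)}\}$ give $a > |d|\sqrt{r}$.

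Then I would invoke the integrality remark recorded just before Example~\ref{baze}: because $\A \in \mathcal{O}_K$, its rational coordinates $a, b, c, d$ all lie in $\tfrac{1}{4}\mathbb{Z}$, so any one of $b, c, d$ that is nonzero satisfies $|{\cdot}| \geq \tfrac14$, i.e.\ $4|{\cdot}| \geq 1$. Combining this with the bounds above: if $b \neq 0$, then $\Tr(\A) = 4a > 4|b|\sqrt{p} \geq \sqrt{p}$, which is part~(1), and parts~(2) and~(3) follow verbatim with $q, c$ and $r, d$. For the final statement, if $\A \notin \mathbb{Z}$ then at least one of $b, c, d$ is nonzero, so the corresponding bound already exceeds $\min(\sqrt{p},\sqrt{q},\sqrt{r})$.

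I do not anticipate a genuine obstacle here: the pairing trick is exactly what turns total positivity into a coefficientwise statement, and the $\tfrac14\mathbb{Z}$ bound converts a merely nonzero rational coefficient into a uniform lower bound of $\tfrac14$. The single point requiring a little care is that the inequalities remain \emph{strict}: strict positivity of the conjugates yields the strict inequality $a > |b|\sqrt{p}$, and since multiplying by $4$ and applying $4|b| \geq 1$ only weakens the right-hand side to $\sqrt{p}$, the conclusion $\Tr(\A) > \sqrt{p}$ stays strict.
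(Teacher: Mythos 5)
Your proof is correct and is essentially identical to the paper's: the paper likewise pairs the conjugates ($\alpha^{(1)}+\alpha^{(2)}>0$ and $\alpha^{(3)}+\alpha^{(4)}>0$) to get $a>|b|\sqrt{p}$, then uses that a nonzero quarter-integer has absolute value at least $\tfrac14$ to conclude $\Tr(\A)=4a>\sqrt{p}$, treating the other cases as analogous. Your only addition is writing out the remaining pairings and the final statement explicitly, which the paper leaves implicit.
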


\begin{proof}
All the cases are analogous, so let's prove only the one when $b \neq 0$. Since $\A \in \mathcal{O}_K$, the coefficients $a, b, c, d$ are quarter-integers. The~element $\A$ is totally positive, and so $\alpha^{(1)}+\alpha^{(2)}>0$. Hence $a>-b\sqrt{p}$. Similarly, from $\alpha^{(3)}+\alpha^{(4)}>0$ it follows that $a>b\sqrt{p}$.
Since $b$ is a~nonzero quarter-integer, the~inequality $a>|b\sqrt{p}|$ implies that $a>\frac{\sqrt{p}}{4}$, and hence $\Tr(\A)=4a >\sqrt{p}.$
\end{proof}

Since we know integral bases explicitly, we can sometimes sharpen the estimates (although we won't need this in the paper).  E.g., in the first case from Example \ref{baze}, we have $\Tr(\A)> \min(4\sqrt{p},2\sqrt{q},2\sqrt{r})$ if $\alpha\neq 0$. 

Similarly as in the quadratic case \cite[Lemma 3]{BK}, the previous lemma implies that all elements of sufficiently small norm are indecomposable:

\begin{prop}	\label{normab}
Assume that $\A \in \mathcal{O}_K^+$ satisfies $N(\A)<2\min(\sqrt{p},\sqrt{q},\sqrt{r})$ and  $n \nmid \A$ for~every $n \in \mathbb{N}, n>1$. Then $\A$ is indecomposable.  
\end{prop}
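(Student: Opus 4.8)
The plan is to argue by contradiction. Suppose $\A$ is decomposable, say $\A = \beta + \gamma$ with $\beta, \gamma \in \mathcal{O}_K^+$ both nonzero. In each embedding $\A^{(i)} = \beta^{(i)} + \gamma^{(i)}$ with all six quantities positive, so $\A \succ \beta$, $\A \succ \gamma$, and $\beta^{(i)}, \gamma^{(i)} < \A^{(i)}$ for every $i$; moreover $N(\beta), N(\gamma)$ are positive rational integers, hence $\geq 1$. Expanding $N(\A) = \prod_i(\beta^{(i)} + \gamma^{(i)})$ and applying the arithmetic--geometric mean inequality to each of the four factors already gives $N(\A) \geq 16\sqrt{N(\beta)N(\gamma)} \geq 16$, which settles the statement whenever $\min(\sqrt p, \sqrt q, \sqrt r) \leq 8$. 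So the substantive regime is when this minimum is large, and there the crude constant $16$ must be replaced by the arithmetic bound coming from Lemma~\ref{stopa}.

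To bring in the trace, I would first use the hypothesis $n \nmid \A$ to observe that $\A \notin \mathbb{Z}$ (a rational integer $>1$ divides itself, and $\A=1$ is trivially indecomposable). Hence at least one of the coefficients $b, c, d$ is nonzero; assume $b \neq 0$, the cases $c\neq 0$ and $d \neq 0$ being identical and producing the thresholds $\sqrt q$ and $\sqrt r$, which is exactly why the final bound is a minimum. Writing $b = b_\beta + b_\gamma$, at least one summand --- say $\beta$ --- has a nonzero $\sqrt p$-coefficient, so $\beta \notin \mathbb{Z}$ and Lemma~\ref{stopa}(1) yields $\Tr(\beta) > \sqrt p$. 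The role of this is that a totally positive integer of controlled norm cannot have all of its conjugates small: the trace bound forces a dominant conjugate to be large, and since $\A^{(i)} > \beta^{(i)}$, this largeness is inherited by $\A$. Conceptually this is the multiquadratic shadow of the quadratic mechanism of \cite[Lemma 3]{BK}: one may also package it by passing to a quadratic subfield, observing that each relative norm $N_{K/\mathbb{Q}(\sqrt t)}(\A) \in \mathcal{O}_{\mathbb{Q}(\sqrt t)}^+$ inherits a decomposition from $\A = \beta+\gamma$ and satisfies $N(\A) = N_{\mathbb{Q}(\sqrt t)/\mathbb{Q}}\!\big(N_{K/\mathbb{Q}(\sqrt t)}(\A)\big)$, thereby reducing the estimate to the already-understood real quadratic case.

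The main work --- and the step I expect to be the genuine obstacle --- is to upgrade the trace information into the \emph{sharp} inequality $N(\A) \geq 2\min(\sqrt p, \sqrt q, \sqrt r)$, which contradicts the hypothesis. Following the quadratic template, I would combine the assumed smallness of $N(\A)$ with the trace lower bound to pin down the size of the smallest conjugate of $\A$, and then feed this back through the decomposition: $\beta$ and $\gamma$ each acquire a small conjugate, which by integrality and total positivity (again via Lemma~\ref{stopa}) forces both summands to have a large dominant conjugate. Here the condition $n \nmid \A$ becomes indispensable --- it excludes the near-balanced decompositions (such as $\A = \delta + \delta$, or splitting off a rational integer) that are the only way a decomposable element could keep its norm below the arithmetic threshold --- and it is precisely the constant $2$ in $2\min(\sqrt p, \sqrt q, \sqrt r)$ that bridges the gap between the elementary bound $16$ and the arithmetic one. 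Carefully tracking this factor of $2$ through the four-conjugate bookkeeping, while handling the case distinction over which of $b, c, d$ is nonzero and controlling the constants that the quarter-integrality of the coefficients introduces, is the delicate part; the surrounding estimates are routine.
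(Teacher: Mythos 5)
There is a genuine gap: you correctly set up the contradiction, correctly identify Lemma~\ref{stopa} and the hypothesis $n \nmid \A$ as the two essential inputs, and then explicitly defer ``the main work'' --- upgrading the trace information to a contradiction with $N(\A) < 2\min(\sqrt p,\sqrt q,\sqrt r)$ --- without carrying it out. Moreover, the mechanism you sketch for that step would not work. You apply Lemma~\ref{stopa} to the summand $\beta$ itself, concluding $\Tr(\beta) > \sqrt p$ and hence that $\A$ has a large dominant conjugate; but a single large conjugate gives no lower bound on $N(\A)$ (the other conjugates can be tiny), so this is not in tension with the norm hypothesis. Likewise, the role you assign to $n \nmid \A$ (excluding ``near-balanced decompositions'' up front) is not how that hypothesis actually enters.

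The missing idea is to apply the trace lemma not to $\beta$ or $\gamma$ but to the \emph{mixed products} of their conjugates. Expanding $N(\A) = \prod_{i=1}^{4}\bigl(\B^{(i)}+\G^{(i)}\bigr)$ into $16$ positive terms and grouping them by the number of $\B$-factors, the terms with exactly one and exactly three $\B$'s assemble into $\Tr\bigl(\B^{(1)}\G^{(2)}\G^{(3)}\G^{(4)}\bigr)$ and $\Tr\bigl(\B^{(1)}\B^{(2)}\B^{(3)}\G^{(4)}\bigr)$, each the trace of a totally positive algebraic integer. Their sum is less than $N(\A) < 2\delta$ with $\delta = \min(\sqrt p,\sqrt q,\sqrt r)$, so one of them is at most $\delta$, and the final clause of Lemma~\ref{stopa} forces that element to lie in $\mathbb{Z}$. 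Galois invariance of that rational integer then yields $\B = \frac{u}{v}\G$ for coprime $u,v \in \mathbb{N}$, hence $\A = (u+v)\mu$ with $\mu = \G/v \in \mathcal{O}_K$ and $u+v \geq 2$ --- and only here does $n \nmid \A$ deliver the contradiction. Your AM--GM observation and the aside about relative norms to quadratic subfields do not substitute for this (the divisibility condition does not transfer to the relative norm), so as written the proposal is a plan rather than a proof.
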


\begin{proof}
Denote by $\delta:=\min(\sqrt{p},\sqrt{q},\sqrt{r})$. For~contradiction, suppose that $\A=\B+\G$ where $\B$, $\G \in \mathcal{O}_K^+$. Then we have 

\begin{align}
2\delta &> N(\A) = \left(\B^{(1)}+\G^{(1)}\right)\left(\B^{(2)}+\G^{(2)}\right)\left(\B^{(3)}+\G^{(3)}\right)\left(\B^{(4)}+\G^{(4)}\right)> \nonumber \\
&>
\Tr\left(\B^{(1)}\G^{(2)}\G^{(3)}\G^{(4)}\right)+
\Tr\left(\B^{(1)}\B^{(2)}\B^{(3)}\G^{(4)}\right) + \text{other (positive) summands}. \nonumber
\end{align}

\noindent The final part of Lemma \ref{stopa} implies that $\B^{(1)}\G^{(2)}\G^{(3)}\G^{(4)}$ or $\B^{(1)}\B^{(2)}\B^{(3)}\G^{(4)}$ must be a (positive) integer. 
Without loss of generality, suppose that it is the former. This element then equals each of its conjugates and we have
$$\B^{(1)}\G^{(2)}\G^{(3)}\G^{(4)}=\B^{(2)}\G^{(1)}\G^{(4)}\G^{(3)} \in \mathbb{N}.$$

\noindent If we divide the~equality by~the~norm of~$\G$ and multiply it by $\G^{(1)}$, we get
$$\B=\B^{(1)}=\frac{\B^{(2)}\G^{(1)}\G^{(4)}\G^{(3)}}{N(\G)} \G^{(1)} =\frac{u}{v} \G,$$
where $u$ and $v$ are coprime natural numbers. Then we have $\B=u\mu$ and $\G = v\mu$ for $\mu:=\G/v \in \mathcal{O}_K$, so $\A = (u+v)\mu$
is divisible by the integer $u+v\geq 2$, a contradiction.
\end{proof}

Let us briefly remark that much more is known about the sizes of norms of indecomposables in the quadratic case (thanks to their description in terms of continued fractions) -- see, e.g., \cite{JK}, \cite{Ka2}.

\subsection{Multiquadratic fields}

Let us now turn our attention to general multiquadratic fields. In this subsection, take $k\in\N$ and let $p_1, \dots, p_k$ be squarefree positive integers such that $K:=\Q(\sqrt {p_1}, \dots, \sqrt {p_k})$
has degree $2^k$ over $\Q$.
To describe the Galois group of $K$ and its ring of integers $\mathcal O_K$, for $I\subset\{1, \dots, k\}$ we denote by $p_I$ the squarefree part of $\prod_{i\in I}p_i$, i.e., $p_I=\frac 1{\ell^2}\prod_{i\in I}p_i$, where $\ell$ is the largest integer such that the right hand side is an integer.
Clearly, $\{\sqrt{p_I}| I\subset\{1, \dots, k\}\}$ is a $\Q$-vector space basis of $K$ and $\Z[\sqrt{p_I} | I\subset\{1, \dots, k\}]\subset\mathcal O_K$.

Let $\sigma_i:K\rightarrow K$ be the automorphism of $K$ defined by $\sigma_i(p_j)=(-1)^{\delta_{ij}}p_j$, where $\delta_{ij}$ is the Kronecker delta.
For $I\subset\{1, \dots, k\}$ let $\sigma_I:=\prod_{i\in I}\sigma_i$, so that $\mathrm{Gal}(K/\Q)=\{\sigma_I|I\subset\{1, \dots, k\}\}$.

Note that in the notation of Subsection \ref{subsec biquad}, we have $\alpha^{(1)}=\sigma_{\emptyset}(\alpha)$, 
$\alpha^{(2)}=\sigma_{2}(\alpha)$, $\alpha^{(3)}=\sigma_{1}(\alpha)$, and $\alpha^{(4)}=\sigma_{12}(\alpha)$.

\medskip

It is possible to describe a $\Z$-basis for $\mathcal O_K$ explicitly \cite{Sch}; we will only need to know that 
\begin{equation}\label{eq ok}
\mathcal O_K\subset \frac 1{2^k}\Z\left[\sqrt{p_I} | I\subset\{1, \dots, k\}\right].
\end{equation}
 This follows from the proof of Satz 3.2 in \cite{Sch}, 
but one can also prove it easily by induction on $k$ in the case when $p_i$ are pairwise coprime (so that $p_I=\prod_{i\in I}p_i$), as we now sketch:

First, at most one of $p_i$ is even, and so we can reorder them so that $p_2, \dots, p_k$ are odd.
Denote by $K_i:=\Q(\sqrt {p_1}, \dots, \sqrt {p_i})$. We'll prove the claim by induction on $i$; for the quadratic field $K_1$ it's well-known. Hence it remains to show that $\mathcal O_{K_i}\subset \frac 12 \mathcal O_{K_{i-1}}[\sqrt{p_i}]$ for $i\geq 2$. 
Let $\frac a2+\frac b2\sqrt {p_i}\in \mathcal O_{K_i}$ with $a, b\in K_{i-1}$.
Its relative trace from $K_i$ to $K_{i-1}$ is $a$, and so $a\in \mathcal O_{K_{i-1}}$.
Considering its relative norm then implies that $b^2p_i\in \mathcal O_{K_{i-1}}$. By our assumptions on $p_i$, none of its prime factors ramifies in $K_{i-1}$, and so the prime ideal factorization of $b^2p_i\mathcal O_{K_{i-1}}$ implies that $b\in \mathcal O_{K_{i-1}}$, as required.

\medskip

Finally, we will need a generalization of Lemma \ref{stopa}.

\begin{lemma}\label{trace}
Suppose that $\alpha=\sum_{I\subset\{1, \dots, k\}}a_I\sqrt{p_I}\in\mathcal O_K^+$ (with $a_I\in\Q$).
If $a_I\neq 0$, then $\Tr_{K/\Q}(\alpha)>\sqrt {p_I}$.
\end{lemma}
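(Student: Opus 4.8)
The plan is to reduce the statement to the real quadratic subfield $F:=\Q(\sqrt{p_I})$ by passing to a partial trace, exactly mirroring the way Lemma~\ref{stopa} combined the conjugates $\alpha^{(1)}+\alpha^{(2)}$ and $\alpha^{(3)}+\alpha^{(4)}$. I may assume $I\neq\emptyset$ (otherwise $\sqrt{p_\emptyset}=1$ and the claim $\Tr_{K/\Q}(\alpha)>1$ is immediate, e.g.\ from $N(\alpha)\geq 1$ together with the AM--GM inequality). Consider the homomorphism $\mathrm{Gal}(K/\Q)\to\{\pm1\}$, $\sigma_J\mapsto(-1)^{|I\cap J|}$; for $I\neq\emptyset$ it is surjective, so its kernel $H:=\{\sigma_J:|I\cap J|\text{ even}\}$ is the index-two subgroup $\mathrm{Gal}(K/F)$ fixing $F$ pointwise. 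As a preliminary observation, since $\sigma_J(\sqrt{p_{I'}})=(-1)^{|I'\cap J|}\sqrt{p_{I'}}$, the terms with $I'\neq\emptyset$ cancel in the full trace, giving $\Tr_{K/\Q}(\alpha)=2^k a_\emptyset$.

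Next I would evaluate the relative trace $\Tr_{K/F}(\alpha)=\sum_{\sigma\in H}\sigma(\alpha)$. Since this element lies in $F=\Q(\sqrt{p_I})$, every basis vector $\sqrt{p_{I'}}$ with $I'\notin\{\emptyset,I\}$ must cancel; the two surviving ones, $1$ and $\sqrt{p_I}$, are fixed by all of $H$ and hence each occur with multiplicity $|H|=2^{k-1}$, so that
\[
\Tr_{K/F}(\alpha)=2^{k-1}\bigl(a_\emptyset+a_I\sqrt{p_I}\bigr).
\]
The crucial point is that $\Tr_{K/F}(\alpha)$ is totally positive in $F$: for each of the two real embeddings $\tau$ of $F$, the number $\tau\bigl(\Tr_{K/F}(\alpha)\bigr)$ is the sum of the $2^{k-1}$ values $\sigma(\alpha)$ over the embeddings $\sigma$ of $K$ extending $\tau$, and each is positive because $\alpha\in\mathcal O_K^+$.

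Total positivity of $\beta:=a_\emptyset+a_I\sqrt{p_I}$ in the real quadratic field $F$ says precisely that both of its conjugates $a_\emptyset\pm a_I\sqrt{p_I}$ are positive, i.e.\ $a_\emptyset>|a_I|\sqrt{p_I}$. Finally I would invoke the inclusion \eqref{eq ok}, which forces each coefficient $a_I$ to lie in $\tfrac1{2^k}\Z$; hence $a_I\neq0$ gives $|a_I|\geq 2^{-k}$, and combining the two inequalities yields $a_\emptyset>2^{-k}\sqrt{p_I}$. Multiplying by $2^k$ and using the first paragraph, $\Tr_{K/\Q}(\alpha)=2^k a_\emptyset>\sqrt{p_I}$, as required.

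The only genuinely delicate step is the clean evaluation of $\Tr_{K/F}(\alpha)$, i.e.\ the vanishing of all coefficients except those of $1$ and $\sqrt{p_I}$. I expect this to be the main place to argue carefully, but it is not deep: it is exactly the assertion that a relative trace lands in the base field $F=\Q(\sqrt{p_I})$, so it follows from Galois theory and needs no explicit character-sum computation. Everything else is elementary bookkeeping that runs parallel to the proof of Lemma~\ref{stopa}.
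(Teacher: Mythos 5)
Your proof is correct and is essentially the paper's argument in Galois-theoretic clothing: the two partial sums of conjugates over $J$ with $\#(I\cap J)$ even resp.\ odd that the paper writes down are exactly your $\Tr_{K/F}(\alpha)=2^{k-1}(a_\emptyset+a_I\sqrt{p_I})$ and its conjugate $2^{k-1}(a_\emptyset-a_I\sqrt{p_I})$, and both proofs conclude identically via $a_\emptyset>|a_I|\sqrt{p_I}\geq 2^{-k}\sqrt{p_I}$ using \eqref{eq ok}. Your separate treatment of $I=\emptyset$ is a harmless extra (the paper tacitly assumes $I\neq\emptyset$, which is all that is used later).
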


\begin{proof}
We have 
$$\sum_{J,\ \# I\cap J\mathrm{\ is\ even}}\sigma_J(\alpha)=2^{k-1}(a_\emptyset+a_I\sqrt{p_I})>0,$$
because $\alpha$ is totally positive, and similarly 
$$\sum_{J,\ \# I\cap J\mathrm{\ is\ odd}}\sigma_J(\alpha)=2^{k-1}(a_\emptyset-a_I\sqrt{p_I})>0.$$
Putting these two inequalities together, we see that $a_\emptyset>|a_I|\sqrt{p_I}\geq\frac 1{2^k}\sqrt{p_I}$, 
as $0\neq a_I\in\frac 1{2^k}\Z$.
This implies that $\Tr_{K/\Q}(\alpha)=2^ka_\emptyset>\sqrt {p_I}$.
\end{proof}

\section{Proof of Theorem 1}\label{sec 3}

We are now ready to prove Theorem \ref{main thm}. As in the quadratic case, our approach is based on the following proposition:

\begin{prop}[\cite{Ka}, Proposition 2.1]\label{promen}
Assume that $K$ is a totally real number field such that there are elements $a_1,a_2,\dots,a_N\in\mathcal O_K^+$ such~that for~all $1 \leq i< j \leq N$ we have: if $c \in \mathcal{O}_K$ and $4a_ia_j \succeq c^2$, then $c=0.$ Then there are no universal totally positive $(N-1)$-ary quadratic forms over~$\mathcal{O}_K$.
\end{prop}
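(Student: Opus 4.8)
The plan is to argue by contradiction: I suppose that $Q$ is a totally positive definite $(N-1)$-ary quadratic form over $\mathcal O_K$ which is universal, and I extract a contradiction from the $N$ special elements $a_1, \dots, a_N$. The guiding idea is that the orthogonality-forcing hypothesis on the $a_i$ (namely $4a_ia_j \succeq c^2 \Rightarrow c = 0$) compels any vectors representing the $a_i$ to be mutually orthogonal with respect to $Q$; but $N$ nonzero orthogonal vectors cannot live in an $(N-1)$-dimensional space.

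First I would pass to the symmetric bilinear form $B$ associated to $Q$, normalized so that $Q(x) = B(x,x)$, equivalently $2B(x,y) = Q(x+y) - Q(x) - Q(y)$. Since $Q$ is universal, each $a_i$ is represented, say $Q(v_i) = a_i$ with $v_i \in \mathcal O_K^{N-1}$. For $i \neq j$ I set $c_{ij} := Q(v_i + v_j) - a_i - a_j = 2B(v_i, v_j)$. Because $v_i + v_j \in \mathcal O_K^{N-1}$ and $Q$ has coefficients in $\mathcal O_K$, each $c_{ij}$ lies in $\mathcal O_K$; this integrality is precisely the reason for working with $2B$ rather than $B$ itself.

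The next step is a Cauchy--Schwarz estimate applied under every real embedding. For each $\sigma\colon K \hookrightarrow \mathbb R$ the form $\sigma(Q)$ is positive definite, so its associated inner product obeys $\sigma(B(v_i,v_j))^2 \leq \sigma(Q(v_i))\,\sigma(Q(v_j)) = \sigma(a_i)\,\sigma(a_j)$. Multiplying by $4$ yields $\sigma(c_{ij}^2) \leq 4\,\sigma(a_i a_j)$ for every $\sigma$, which is exactly the statement $4a_ia_j \succeq c_{ij}^2$. By the hypothesis of the proposition this forces $c_{ij} = 0$, and hence $B(v_i, v_j) = 0$ for all $i \neq j$.

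Finally I would conclude by linear algebra over $K$. Each $v_i$ is nonzero, since $Q(v_i) = a_i \succ 0$ while $Q(0) = 0$, and the $v_i$ are pairwise $B$-orthogonal with $B(v_i, v_i) = a_i \neq 0$; pairing any relation $\sum_i \lambda_i v_i = 0$ against $v_j$ gives $\lambda_j a_j = 0$, so $\lambda_j = 0$ and the $v_i$ are $K$-linearly independent. But then $v_1, \dots, v_N$ are $N$ independent vectors in $K^{N-1}$, which is impossible. The only genuinely delicate point is the bookkeeping around the factor of two --- ensuring $c_{ij} \in \mathcal O_K$ so that the hypothesis is actually applicable --- combined with the observation that positive-definiteness under each embedding is what legitimizes the Cauchy--Schwarz bound; once these are secured, the rest of the argument is forced.
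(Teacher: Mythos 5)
Your proof is correct and is essentially the standard argument for this proposition: the paper itself only sketches the idea and defers to \cite{Ka}, whose proof is precisely your Cauchy--Schwarz-plus-orthogonality argument, including the key point that $2B(v_i,v_j)=Q(v_i+v_j)-Q(v_i)-Q(v_j)$ is the integral quantity to which the hypothesis applies. The only step worth making explicit is why the embedding-wise inequalities $\sigma(4a_ia_j-c_{ij}^2)\geq 0$ yield $4a_ia_j\succeq c_{ij}^2$ in the paper's sense (equal or totally positive difference): since $\sigma$ is injective, a nonzero element cannot have a vanishing conjugate, so nonnegativity at every embedding forces either strict positivity everywhere or equality.
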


We shall not repeat its proof here, but the rough idea is that the inequalities (essentially) force each universal form over $K$ to contain the diagonal subform $a_1x_1^2+\dots+a_Nx_N^2$. 
Note that the proposition covers arbitrary integral forms without assuming that they are classical (in the sense of having all cross-coefficients divisible by 2).

Blomer and Kala then used convergents to the continued fraction for $\sqrt D$ to construct the elements $a_1,a_2,\dots,a_N$, and hence to prove the following theorem.

\begin{theorem}[\cite{BK}, Theorem 1, and \cite{Ka}, Theorem 1.1]\label{forms} 
For every $N$ there exist infinitely many squarefree positive integers $D$ such that $K=\Q(\sqrt D)$ contains elements $a_1,a_2,\dots a_N$ from~Proposition \ref{promen}. Consequently,
every universal (totally positive) quadratic form  over $\Q(\sqrt{D})$ has at least $N$ variables, i.e., $m(\mathbb Q(\sqrt D))\geq N$.
\end{theorem}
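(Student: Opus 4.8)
The plan is to produce the elements $a_1,\dots,a_N$ required by Proposition \ref{promen} and then simply invoke that proposition. Thus for each $N$ it suffices to exhibit infinitely many squarefree positive $D$ for which $K=\Q(\sqrt D)$ contains $a_1,\dots,a_N\in\mathcal O_K^+$ with the property that, for all $i<j$, any $c\in\mathcal O_K$ satisfying $4a_ia_j\succeq c^2$ must be $0$; the bound $m(\Q(\sqrt D))\ge N$ is then immediate from Proposition \ref{promen}. As the excerpt indicates, the natural source of these elements is the theory of continued fractions, which in the quadratic case controls the additively indecomposable integers.

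Write $\sigma_1,\sigma_2$ for the two real embeddings. The totally positive indecomposables of $\mathcal O_K$ are, up to totally positive units, built from the convergents $p_k/q_k$ of the periodic continued fraction of $\sqrt D$, and they sit close to the boundary rays of the positive cone: for each such $\alpha_k$ one conjugate is large while the other is correspondingly small, the norm $N(\alpha_k)=\sigma_1(\alpha_k)\sigma_2(\alpha_k)$ staying bounded. First I would fix an explicit infinite parametric family of $D$ — for instance values for which $\sqrt D$ has a long period with large partial quotients — so that the expansion, the convergents, and hence the indecomposables $\alpha_k$ are all available in closed form, and so that at least $N$ of them are large and well separated. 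I would then set $a_1,\dots,a_N$ equal to suitably chosen such indecomposables. Letting the parameter tend to infinity yields infinitely many distinct $D$.

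The heart of the argument is verifying the defining inequality. Fix $i<j$. Applying the two embeddings, $4a_ia_j\succeq c^2$ is equivalent to $\sigma_1(c)^2\le 4\sigma_1(a_ia_j)$ together with $\sigma_2(c)^2\le 4\sigma_2(a_ia_j)$, i.e. to the statement that $(\sigma_1(c),\sigma_2(c))$ lies in a fixed axis-parallel rectangle $R\subset\R^2$. I would then prove that $R$ meets the lattice $(\sigma_1,\sigma_2)(\mathcal O_K)$ only at the origin. The near-boundary position of the indecomposables $a_i$ makes $R$ long and thin, and the lattice points that could possibly fall inside such a rectangle are exactly those coming from the convergents of $\sqrt D$; the explicit form of the expansion is then used to check that every such candidate violates one of the two inequalities, so that only $c=0$ survives. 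Arranging this for every pair $i<j$ simultaneously is precisely what dictates the choice of the family of $D$.

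I expect this lattice-avoidance estimate to be the main obstacle, since crude invariants are useless here: multiplying the two embedding inequalities gives only $N(c)^2\le 16\,N(a_i)N(a_j)$, and taking traces gives only $\Tr(c^2)\le\Tr(4a_ia_j)$, and neither of these can rule out all nonzero $c$ (both right-hand sides are bounded below away from the values forcing $c=0$). One therefore genuinely needs the fine Diophantine information carried by the continued fraction — the extremal approximation property of the convergents — to close the rectangle off. Once the inequality is established for all $i<j$ across the whole infinite family, Proposition \ref{promen} gives $m(\Q(\sqrt D))\ge N$, completing the proof.
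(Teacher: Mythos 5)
The paper itself offers no proof of Theorem \ref{forms}: it is imported wholesale from \cite{BK} and \cite{Ka}, with only the remark that the elements $a_i$ are built from convergents of the continued fraction of $\sqrt D$. Your sketch agrees with that one-line description of the strategy, but as a proof it has a genuine gap: every step carrying actual content is deferred. You do not specify the family of $D$, you do not specify the $a_i$, and the central claim --- that the rectangle $R$ determined by $4a_ia_j\succeq c^2$ meets $(\sigma_1,\sigma_2)(\mathcal O_K)$ only at the origin --- is exactly the statement to be proved and is merely announced (``I would then prove\ldots''). Nothing in the proposal can be checked or falsified, so it does not yet constitute a proof of the theorem.

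More seriously, your closing paragraph steers away from the argument that actually closes this gap. The verification in \cite{BK} and \cite{Ka} --- the same scheme this paper generalizes in Section \ref{sec 3} --- rests precisely on the two ``crude'' invariants you declare useless, split according to whether $c$ is rational. Writing $c=x+y\sqrt D$ with $c\notin\Z$, the quadratic forerunner of Lemma \ref{stopa} (\cite[Lemma~3]{BK}) applied to the totally positive integer $c^2=x^2+y^2D+2xy\sqrt D$ gives $\Tr(c^2)>\sqrt D$ (and the degenerate case $x=0$ gives $\Tr(c^2)=2y^2D\geq D/2$ directly); the $a_i$ are chosen --- and this is the only place the continued fraction enters --- so that $4\Tr(a_ia_j)<\sqrt D$, which is compatible with $N(a_i)\geq 1$ because balancing the conjugates pushes $\Tr(a_ia_j)$ down to roughly $2\sqrt{N(a_i)N(a_j)}$, and this contradicts $\Tr(c^2)\leq 4\Tr(a_ia_j)$. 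If instead $0\neq c\in\Z$, then $c^2\geq 1$, while the $a_i$ are arranged to have one conjugate smaller than $1/2$, so $4\sigma_2(a_i)\sigma_2(a_j)<1$ and the relation $4a_ia_j\succeq c^2$ already fails at that single embedding. No fine lattice-point analysis of a long thin rectangle is required; the continued fraction serves only to supply $N$ indecomposables of small norm with one prescribed small conjugate. So the missing step in your write-up is not an intractable Diophantine estimate but a short trace-plus-one-embedding computation that your own discussion explicitly rules out as unworkable; as written, the proposal both omits the key argument and argues against the correct one.
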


As we already indicated in the introduction, we will now use these results to prove Theorem \ref{main thm}.

\begin{proof}[Proof of Theorem \ref{main thm}.]
To prove the theorem for a multiquadratic field $\Q(\sqrt {p_1}, \dots, \sqrt {p_k})$, we will proceed by induction on $k$. Theorem \ref{forms} gives the desired result in the quadratic case when $k=1$.

For the induction step, assume that $p_1, \dots, p_{k}$ are pairwise coprime squarefree positive integers, so that $K:=\Q(\sqrt {p_1}, \dots, \sqrt {p_{k}})$ has degree $2^{k}$ and satisfies the assumptions of Proposition \ref{promen}, i.e., there are elements $a_1,a_2,\dots a_N\in \mathcal{O}_{K}^+$ 
such~that for~all $1 \leq i< j \leq N$ we have that if $c \in \mathcal{O}_{K}$ and $4a_ia_j \succeq c^2$, then $c=0.$

The idea of~our proof is to find some squarefree $q=p_{k+1}$, coprime to all $p_i$, such that in~$L:=K(\sqrt q)$, the~elements $a_i$ still satisfy the~condition of~Proposition \ref{promen}. We will use our knowledge (\ref{eq ok}) about the rings of integers, 
combined with Lemma \ref{stopa} which gives a lower bound on the trace of positive integers. In the proof, $\Tr_K$ denotes the trace from $K$ to $\Q$ and 
$\Tr_L$ is the trace from $L$ to $\Q$ (note that if $\alpha\in K\subset L$, then $\Tr_L(\A)=2\Tr_K(\A)$).

Namely, choose a~squarefree~positive integer $q=p_{k+1}$ such that
\begin{itemize}
\item $\sqrt{q}>2^{k+1}$, 
\item $\sqrt{q} >8\Tr_K(a_ia_j)$ for~every $1 \leq i <j\leq N$, and
\item $\gcd(p_i,q)=1$ for each $i=1, \dots, k$.
\end{itemize}

To verify the assumptions of Proposition \ref{promen} for $a_1,a_2,\dots a_N\in L=K(\sqrt q)$, suppose that $4a_ia_j \succeq c^2$ for~some indices $1\leq i< j\leq N$ and $c \in \mathcal{O}_L$. This in particular implies that $4\Tr_L(a_ia_j)\geq\Tr_L(c^2)$.

Let $c=u +v\sqrt{q}$ where $u,v \in K$, and let's distinguish three cases: 

a) $v=0$. Then $c\in K$, and in fact $c\in\mathcal O_K=K\cap\mathcal O_L$.
Then our assumption on the elements $a_i$ implies that $c=0$.

b) $u=0, v\neq 0$. Then $c^2 = v^2q$ and we have
$$\Tr_L(c^2)=q \Tr_L(v^2)=2q\Tr_K(v^2) \geq \frac 1{2^{k+1}} q > \sqrt{q}.$$
Here the last inequality holds by the first assumption on $q$, and
the inequality $\Tr_K(v^2)\geq \frac 1{2^{k+2}}$ holds since by (\ref{eq ok}), $v\sqrt q\in\mathcal O_L$ implies that $v\in \frac 1{2^{k+1}}\Z[\sqrt{p_I} | I\subset\{1, \dots, k\}]$, and then 
$v^2\in \frac 1{2^{2k+2}}\Z[\sqrt{p_I} | I\subset\{1, \dots, k\}]$, and is totally positive. 
 
But then we have contradiction with $\sqrt{q} > 8\Tr_K(a_ia_j)=4\Tr_L(a_ia_j)\geq\Tr_L(c^2)$. 

c) $u, v\neq 0$. Then $$c^2=u^2+v^2q+2uv\sqrt{q}=:\sum_{I\subset\{1, \dots, k+1\}}a_I\sqrt{p_I}\in\mathcal O_L^+.$$ 

Since $2uv$ is nonzero, at least one of the coefficients $a_I$ with $k+1\in I$ is nonzero, and so Lemma \ref{trace} implies that
$\Tr_L(c^2)>\min(\sqrt{p_I}|k+1\in I)=\sqrt{q}$ (this equality holds because $p_I=\prod_{i\in I} p_i$ and $p_{k+1}=q$).
But then we have
$$8\Tr_K(a_ia_j)= 4\Tr_L(a_ia_j) \geq \Tr_L(c^2) > \sqrt{q},$$ which contradicts the second condition of the definition of $q$. 

We have proved that $c=0$, and so the~elements $a_i$ satisfy the~condition of Proposition \ref{promen} (over $L$),
concluding the proof.
\end{proof}

\end{document}